\newcommand{\defeq}{\coloneqq}
\newcommand{\IN}{\mathbb N}
\newcommand{\IZ}{\mathbb Z}
\newcommand{\IQ}{\mathbb Q}
\newcommand{\Ra}{\Rightarrow}
\newtheorem{theorem}{Theorem}
\newtheorem{lemma}{Lemma}
\newtheorem{claim}{Claim}
\newtheorem{proposition}{Proposition}
\newtheorem{corollary}{Corollary}
\theoremstyle{definition}
\newtheorem{definition}{Definition}
\title{Semiaffine sets in Abelian groups}
\author{Iryna Banakh, Taras Banakh, Maria Kolinko, Alex Ravsky}
\address{I.~Banakh, A.~Ravsky: Institute for Applied Problems of Mechanics and Mathematics of National Academy of Sciences of Ukraine,  Naukova 3b, Lviv, Ukraine}
\email{ibanakh@yahoo.com, alexander.ravsky@uni-wuerzburg.de}
\address{T.~Banakh, M.~Kolinko: Faculty of Mechanics and Mathematics, Ivan Franko National University of Lviv, Ukraine}
\email{t.o.banakh@gmail.com, marybus20@gmail.com}
\subjclass[2010]{05E16; 20K99; 52A01}
\begin{document}
\begin{abstract} A subset $X$ of an Abelian group $G$ is called {\em semiaffine} if for every $x,y,z\in X$ the set $\{x+y-z,x-y+z\}$ intersects $X$. We prove that a subset $X$ of an Abelian group $G$ is semiaffine if and only if one of the following conditions holds: (1) $X=(H+a)\cup (H+b)$ for some subgroup $H$ of $G$ and some elements $a,b\in X$; (2) $X=(H\setminus C)+g$ for some $g\in G$, some subgroup $H$ of $G$ and some midconvex subset $C$ of the group $H$. A subset $C$ of a group $H$ is {\em midconvex} if for every $x,y\in C$, the set $\frac{x+y}2\defeq\{z\in H:2z=x+y\}$ is a subset of $C$.
\end{abstract}
\maketitle

In this paper we study the structure of affine and semiaffine sets in Abelian groups.

 {\em All groups in this paper are assumed to be Abelian}.

\begin{definition} A subset $X$ of a group $G$ is called 
\begin{itemize}
\item {\em affine} if $\forall x,y,z\in X\;x+(y-z)\in X$;
\item {\em semiaffine} if $\forall x,y,z\in X\; \big(x+(y-z)\in X\;\vee\; x-(y-z)\in X\big)$.
\end{itemize}
\end{definition}

It is clear that the empty subset of a group is affine, and every affine set in a group is semiaffine.  

Affine sets in groups are algebraic counterparts of the classical geometric notion of an affine set in a vector space. Semiaffine sets are algebraic counterparts of $1$-spherical metric spaces studied in \cite{BBKR1}. A metric space $(X,d)$ is called {\em $1$-spherical} if for every points $a,b,c\in X$ there exists a points $x\in X$ such that $d(c,x)=d(a,b)$. So, $1$-spherical metric spaces satisfy a weak form of the Axiom of Segment Construction, well-known in Axiomatic Foundations of Geometry, see \cite[p.125]{Bolyai}, \cite[p.81]{BS}, \cite[p.119]{Greenberg},\cite[p.82]{Hartshorne} \cite[p.8]{Hilbert}, \cite[p.11]{SST}. Observe that a metric subspace of the real line  is $1$-spherical if and only if it is a semiaffine subset of the additive group of real numbers.
The problem of classification of $1$-spherical subspaces of the real line was posed in \cite{BBKR1} and was resolved there using the main result of this paper, which characterized semiaffine subsets in groups.

The following simple proposition shows that nonempty affine sets are just shifts of subgroups.

\begin{proposition} For a nonempty subset $X$ of a group $G$ the following conditions are equivalent:
\begin{enumerate}
\item $X$ is affine; 
\item $X-x$ is a subgroup of $G$ for every $x\in X$;
\item $X-x$ is a subgroup of $G$ for some $x\in X$.
\end{enumerate}
\end{proposition}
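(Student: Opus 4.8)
The plan is to prove the cycle of implications $(1)\Ra(2)\Ra(3)\Ra(1)$, which is the cleanest route since $(2)\Ra(3)$ is immediate from the nonemptiness of $X$ (pick any $x\in X$). Throughout I will use the elementary \emph{subgroup criterion}: a nonempty subset $S$ of an Abelian group is a subgroup if and only if $a-b\in S$ for all $a,b\in S$ (indeed such an $S$ contains $0=a-a$, contains $-a=0-a$, and contains $a+b=a-(-b)$).

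For $(1)\Ra(2)$, fix $x\in X$ and put $H\defeq X-x$. Since $x\in X$, we have $0=x-x\in H$, so $H\neq\varnothing$. Given $h_1,h_2\in H$, write $h_1=y-x$ and $h_2=z-x$ with $y,z\in X$. Affinity applied to the triple $x,y,z\in X$ yields $x+(y-z)\in X$, hence $h_1-h_2=(y-x)-(z-x)=y-z=\big(x+(y-z)\big)-x\in X-x=H$. By the subgroup criterion, $H=X-x$ is a subgroup of $G$; since $x\in X$ was arbitrary, $(2)$ holds.

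For $(3)\Ra(1)$, suppose $H\defeq X-x_0$ is a subgroup of $G$ for some $x_0\in X$; then $X=H+x_0$. Take arbitrary $x,y,z\in X$ and write $x=a+x_0$, $y=b+x_0$, $z=c+x_0$ with $a,b,c\in H$. Then $x+(y-z)=(a+b-c)+x_0$, and $a+b-c\in H$ because $H$ is a subgroup, so $x+(y-z)\in H+x_0=X$. Thus $X$ is affine, completing the cycle.

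I do not expect any genuine obstacle here: the statement is essentially a repackaging of the subgroup criterion, and the only point requiring a moment's care is the bookkeeping of shifts by the chosen base point $x$ (resp.\ $x_0$). If one wanted to be even more economical one could observe that $(1)$ is manifestly invariant under translation of $X$, reducing everything to the case $0\in X$, where affinity of $X$ literally says that $X$ is closed under $x+(y-z)$, i.e.\ under differences and sums — but the direct cyclic argument above is already short enough to present as is.
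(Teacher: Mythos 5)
Your proof is correct and takes essentially the same route as the paper: the same cycle $(1)\Ra(2)\Ra(3)\Ra(1)$, with $(1)\Ra(2)$ established by applying affinity to the triple $x,y,z$ to get closure of $X-x$ under differences, and $(3)\Ra(1)$ by the identity $(x+y-z)-x_0=(x-x_0)+(y-x_0)-(z-x_0)$. The only cosmetic difference is that you make the subgroup criterion and the nonemptiness of $X-x$ explicit, which the paper leaves implicit.
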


\begin{proof} $(1)\Ra(2)$. Assume that $X$ is affine and take any $x\in X$. For every $y,z\in X$ the affinity of $X$ ensures that $(y-x)-(z-x)=(x+y-z)-x\in X-x$, witnessing that $X-x$ is a subgroup of $G$.
\smallskip

The implication $(2)\Ra(3)$ is trivial.
\smallskip

$(3)\Ra(1)$ If $X-a$ is a subgroup of $G$ for some $a\in X$, then for every $x,y,z\in X$ we have $(x+y-z)-a=(x-a)+(y-a)-(z-a)\in X-a$ and hence $x+y-z\in X$, which means that the set $X$ is affine.
\end{proof}

A characterization of semiaffine sets in groups is more complicated and involves the notion of a midconvex set in a group.

\begin{definition} A subset $X$ of a group $G$ is {\em midconvex} if for every $x,y\in X$ the set $$\frac{x+y}2\defeq \{z\in G:2z=x+y\}$$ is a subset of $X$.
\end{definition}

The main result of this paper is the following characterization of semiaffine sets in groups.

\begin{theorem}\label{t:main} A subset $X$ of a group $G$ is semiaffine if and only if one of the following conditions holds:
\begin{enumerate}
\item $X=(H+a)\cup (H+b)$ for some subgroup $H$ of $G$ and some elements $a,b\in X$;
\item $X=(H\setminus C)+g$ for some $g\in G$, some subgroup $H\subseteq G$ and some midconvex set $C$ in $H$.
\end{enumerate}
\end{theorem}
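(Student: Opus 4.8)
The plan is to prove the two implications separately. Checking that sets of the form (1) or (2) are semiaffine is a direct verification; the converse is a structural argument that, after a harmless translation making $0\in X$, splits into two cases according to whether $X-X$ is a subgroup. I expect the second of these cases (leading to form (1)) to contain essentially all of the difficulty.

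First I would treat the ``if'' part. If $X=(H+a)\cup(H+b)$, translate by $-a$ (which preserves both semiaffinity and being of form (1)) to assume $a=0$, so the image of $X$ in $G/H$ is $\{\bar 0,\bar b\}$. For $x,y,z\in X$ write their images as $\varepsilon_x\bar b,\varepsilon_y\bar b,\varepsilon_z\bar b$ with $\varepsilon_x,\varepsilon_y,\varepsilon_z\in\{0,1\}$; since $(\varepsilon_x+\varepsilon_y-\varepsilon_z)+(\varepsilon_x-\varepsilon_y+\varepsilon_z)=2\varepsilon_x\in\{0,2\}$, the two integers $\varepsilon_x\pm(\varepsilon_y-\varepsilon_z)$ cannot both lie in $\{-1,2\}$, so one of them lies in $\{0,1\}$, meaning one of $x+y-z$, $x-y+z$ lies in $H\cup(H+b)=X$. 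If instead $X=(H\setminus C)+g$ with $C$ midconvex in $H$, translate by $-g$ to assume $X=H\setminus C\subseteq H$; if $x,y,z\in X$ had both $x+y-z\in C$ and $x-y+z\in C$, midconvexity would give $\frac{(x+y-z)+(x-y+z)}2\subseteq C$, but this set contains $x$, contradicting $x\notin C$.

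For the ``only if'' part, assume $X$ is semiaffine. If $X=\emptyset$ then $X=(H\setminus H)+g$ for the trivial subgroup $H$, so (2) holds; otherwise I would translate so that $0\in X$ and set $H\defeq\langle X\rangle\supseteq X$. Applying semiaffinity to triples $(0,y,z)$ gives the basic identity $X-X=X\cup(-X)$ (so $X$ is a subgroup precisely when it is symmetric). If $X-X$ is a subgroup, then $X-X=\langle X\rangle=H$, and I claim $C\defeq H\setminus X$ is midconvex in $H$, which yields (2) with $g=0$: if $u,v\in C$ and $w\in H$ with $2w=u+v$, then assuming $w\in X$ and writing $w-u\in H=X-X$ as a difference of two elements of $X$, semiaffinity with base point $w$ gives $2w-u\in X$ or $u\in X$, i.e. $v\in X$ or $u\in X$, contradicting $u,v\in C$; hence $w\notin X$ and $\frac{u+v}2\subseteq C$.

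The remaining case, where $X\cup(-X)=X-X$ is not a subgroup, is where I expect the real work, and the target is (1). Since $X\cup(-X)$ is symmetric, contains $0$, but is not closed under addition, a short case analysis (using $X-X=X\cup(-X)$ to rule out sums of an element of $X$ with a negative of another) produces $a,b\in X$ with $a+b\notin X\cup(-X)$, and then semiaffinity on $(a,b,0)$ and $(b,a,0)$ forces $a-b,b-a\in X$. Letting $S\defeq\{s\in G:s+X=X\}$ be the stabilizer of $X$ (a subgroup with $X+S=X$), passing to $G/S$ replaces $X$ by a semiaffine set that still falls in this case and has trivial stabilizer, and it suffices to show that such a set has at most two elements: then $X$ itself is a union of at most two cosets of $S$, giving (1) with $a,b$ chosen in $X$. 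This last reduction is the main obstacle, and I would attack it by supposing a third element of $X$ exists and analyzing the subcases determined by whether $-a$ and $-b$ lie in $X$, aiming to produce in each subcase a nonzero element stabilizing $X$, contradicting triviality of the stabilizer.
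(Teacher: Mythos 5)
Your ``if'' part and your treatment of the case where $X-X$ is a subgroup are correct and essentially identical to the paper's (the paper packages the latter as the verification that $C\coloneqq H\setminus(X-g)$ is midconvex in $H=X-X$, using exactly your ``write $w-u$ as $y-z$ and apply semiaffinity at $w$'' step). The genuine gap is in the remaining case, where $X-X$ is not a subgroup and the target is form (1). Your reductions there are all fine: extracting $a,b\in X$ with $a+b\notin X\cup(-X)$, and quotienting by the stabilizer $S$ to reduce to showing that a semiaffine set containing $0$, with trivial stabilizer and non-subgroup difference set, has at most two elements. But that last claim \emph{is} the hard direction of the theorem, and you do not prove it --- you only announce that you ``would attack it'' by a subcase analysis on whether $-a$ and $-b$ lie in $X$. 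As stated, that plan cannot succeed: knowing the membership of finitely many specific elements does not control $X$ globally, and producing either a nonzero stabilizing element or a contradiction from a third element requires controlling $X$ along an entire cyclic subgroup's worth of translates.

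For comparison, the paper gets through this case via Lemma~\ref{l:2} (if $X-X$ is not a subgroup, some $d\in X-X$ has $2d\notin X-X$ --- itself a nontrivial case analysis; note your $a+b\notin X-X$ yields such a $d$ directly only when $a=b$, since $\frac{a+b}{2}$ need not meet $G$) followed by Lemma~\ref{l:1}, whose engine is Claim~\ref{cl:4}: for $x,y,z\in X\cap(X-d)$ one has $x+y-z\in X\cap(X-d)$, proved by applying semiaffinity twice and deriving the forbidden conclusion $2d\in X-X$. This shows $X\cap(X-d)$ is affine, hence a coset of a subgroup $H$ with $X=(H+x)\cup(H+x+d)$; since $H$ then stabilizes $X$, triviality of the stabilizer gives exactly your $|X|\le 2$. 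Your proposal contains no substitute for this key computation, so the proof is incomplete at its crux.
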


Theorem~\ref{t:main} will be proved in Section~\ref{s:main}. Now we discuss some implications of Theorem~\ref{t:main} and the characterizations of midconvex sets from \cite{BBKR}.

Let $G$ be a subgroup of the additive group of real numbers. A subset $C\subseteq G$ is called {\em order-convex} in $G$ if for every real numbers $x\le y$ in $C$, the order interval $\{z\in G:x\le z\le y\}$ is a subset of $C$.

The following characterization of midconvex sets in groups was proved in \cite{BBKR}.

\begin{theorem}\label{t:1} A subset $X$ of a group $G$ is midconvex if and only if for every $g\in G$ and $x\in X$, the set $\{n\in\IZ:x+ng\in X\}$ is equal to $C\cap H$ for some order-convex set $C\subseteq \IZ$ and some subgroup $H\subseteq \IZ$ such that the quotient group $\IZ/H$ has no elements of even order.
\end{theorem}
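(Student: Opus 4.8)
The plan is to reduce Theorem~\ref{t:1} to a combinatorial statement about subsets of $\IZ$ and then prove that statement. First I would record the one-parameter form of midconvexity: a set $X\subseteq G$ is midconvex if and only if $x+2g\in X$ implies $x+g\in X$ for all $x\in X$ and $g\in G$ (given $x,y\in X$ and $z\in G$ with $2z=x+y$, set $g\defeq z-x$, so that $y=x+2g$ and $z=x+g$; the converse instance is immediate). Now fix $x\in X$ and $g\in G$ and write $A\defeq A_{g,x}=\{n\in\IZ:x+ng\in X\}$, so that $0\in A$. Then midconvexity of $X$ is equivalent to the assertion that \emph{every} such $A$ has the following property $(\star)$: for all $n,m\in A$ with $n\equiv m\pmod 2$ one has $\frac{n+m}2\in A$. (Given $n,m\in A$ of equal parity, apply the one-parameter form with base point $x+ng\in X$ and with $\frac{m-n}2\,g$ in place of $g$; conversely, $0,2\in A_{g,x}$ already forces $1\in A_{g,x}$.) Hence it suffices to prove, for any $A\subseteq\IZ$ with $0\in A$, that $(\star)$ holds if and only if $A=C\cap H$ for some order-convex $C\subseteq\IZ$ and some subgroup $H\subseteq\IZ$ with $\IZ/H$ having no element of even order. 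Since the subgroups of $\IZ$ are the sets $m\IZ$ ($m\ge 0$), and $\IZ/m\IZ$ has no element of even order precisely when $m$ is odd or $m=0$, the admissible $H$ are exactly these.

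The ``if'' direction (which yields the backward implication of the theorem) is a direct check of $(\star)$: if $C$ is order-convex, $m$ is odd, and $n\equiv n'\pmod 2$ with $n,n'\in C\cap m\IZ$, then $\frac{n+n'}2$ lies between $n$ and $n'$ and hence in $C$, and it is divisible by $m$ because $m\mid n+n'$ and $\gcd(m,2)=1$; the case $m=0$ is trivial. For the ``only if'' direction, assume $0\in A$ and $(\star)$. The case $|A|\le 1$ is immediate (then $A=\{0\}$ and $H=\{0\}$ works), so assume $|A|\ge 2$ and set $H\defeq\langle A\rangle=m\IZ$ with $m\ge 1$. I would first show $m$ is odd. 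Were every element of $A$ even, then every midpoint produced by $(\star)$ would again be even; but the midpoint of $a,b\in A$ is $a+\frac{b-a}2$, whose evenness forces $4\mid b-a$, and repeating with the pair $a,\,a+\frac{b-a}2$ forces $8\mid b-a$, and so on, so $2^k\mid b-a$ for all $k$ and $A$ is a singleton --- a contradiction. Hence $A$ contains an odd number, $m=\gcd(A)$ is odd, and dividing through by $m$ produces a set $\{a/m:a\in A\}$ that still contains $0$, still satisfies $(\star)$ (multiplication by the odd number $m$ preserves parity), and has gcd $1$. Everything therefore reduces to the claim that a $(\star)$-closed subset of $\IZ$ containing $0$ with gcd $1$ is order-convex; granting it, $A$ --- being $m$ times such a set --- equals $C\cap m\IZ$ with $C$ the order-convex hull of $A$.

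This last claim is the technical core, and the step I expect to be the main obstacle. The natural route is to prove $[a,b]\cap\IZ\subseteq A$ for $a<b$ in $A$ by induction on $b-a$: when $b-a$ is even, $\frac{a+b}2\in A$ splits the interval and one recurses; when $b-a$ is odd the endpoints have opposite parity and $(\star)$ provides no midpoint, so one must exploit $\gcd(A)=1$ and $0\in A$ --- which together guarantee an odd element of $A$ and, via iterated midpoints, a block of consecutive integers inside $A$ --- and then propagate that block across the interval, treating the unbounded cases ($A$ a ray, or all of $\IZ$) the same way. Arranging this cascade of midpoint operations so that no gap of $A$ can survive is the only genuinely delicate part of the argument; the rest is routine bookkeeping. (The full proof of this combinatorial lemma, and hence of Theorem~\ref{t:1}, is carried out in \cite{BBKR}.)
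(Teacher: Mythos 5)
First, a point of reference: the paper does not actually prove Theorem~\ref{t:1} --- it is imported from the companion preprint \cite{BBKR} --- so there is no internal proof to measure your argument against. Judged on its own terms, your reduction is correct and cleanly executed: the one-parameter reformulation of midconvexity ($x+2g\in X\Rightarrow x+g\in X$), the equivalence of midconvexity of $X$ with property $(\star)$ (closure under midpoints of same-parity pairs) for every trace set $A_{g,x}$, the verification that sets of the form $C\cap m\IZ$ with $C$ order-convex and $m$ odd or $m=0$ satisfy $(\star)$, the descent $2^k\mid b-a$ showing that $\gcd(A)$ must be odd when $|A|\ge 2$, and the rescaling by the odd number $m$ are all sound.

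However, there is a genuine gap exactly where you predict one: the claim that a $(\star)$-closed subset of $\IZ$ containing $0$ with greatest common divisor $1$ is order-convex is asserted, sketched in one sentence, and then deferred to \cite{BBKR}. This claim carries essentially the whole content of the theorem once the routine reductions are made, and the sketch does not survive scrutiny as written. Your induction on $b-a$ breaks precisely when $b-a$ is odd, and the proposed repair --- that $\gcd(A)=1$ and $0\in A$ ``guarantee, via iterated midpoints, a block of consecutive integers inside $A$'' --- is itself an unproved assertion of comparable difficulty to the original claim. To produce two consecutive integers in $A$ one must show that the minimal positive difference $d$ between elements of $A$ equals $1$; showing that $d$ is odd is easy (an even $d$ would be halved by $(\star)$), but ruling out odd $d>1$ requires a further descent that plays a pair realizing $d$ against a pair of elements incongruent modulo $d$ (such a pair exists because $\gcd(A)=1$), using the oddness of $d$ to locate a same-parity pair with a smaller offending difference --- and even then the propagation of the resulting block across an arbitrary interval, including the unbounded cases, still has to be organized. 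None of this is routine bookkeeping, and none of it appears in your write-up. As it stands, the proposal is a correct and useful reduction of Theorem~\ref{t:1} to an unproved combinatorial lemma, not yet a proof of the theorem.
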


We recall that the {\em order} of an element $g$ of a group $G$ in the smallest positive integer $n$ such that $ng=0$. If $ng\ne 0$ for all $n\in\IN$, then we say that $g$ has {\em infinite order}. A group $G$ is called {\em periodic} if every element of $G$ has finite order. 

The following characterization of midconvex sets in periodic groups was proved in \cite{BBKR}.

\begin{theorem}\label{t:2} A subset $X$ of a periodic group $H$ is midconvex if and only if for every $x\in X$ the set $X-x$ is a subgroup of $H$ such that the quotient group $H/(X-x)$ contains  no elements of even  order.
\end{theorem}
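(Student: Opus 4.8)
The plan is to prove the two implications separately; periodicity enters only in the ``only if'' part, where I would invoke Theorem~\ref{t:1}. For the (periodicity-free) ``if'' direction, suppose $X-x$ is a subgroup of $H$ with $H/(X-x)$ having no element of even order for each $x\in X$. To check midconvexity, take $x,y\in X$ and $z\in H$ with $2z=x+y$, and set $Y\defeq X-x$; then $y-x\in Y$ and $2(z-x)=y-x\in Y$, so the coset $(z-x)+Y$ has order $1$ or $2$ in $H/Y$, hence order $1$ (there being no element of even order), i.e.\ $z-x\in Y$ and $z\in x+Y=X$. (The case $X=\emptyset$ is trivial.)

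For the ``only if'' direction, assume $X$ is midconvex, fix $x\in X$, and replace $X$ by the set $Y\defeq X-x$ (still midconvex, midconvexity being translation invariant), which contains $0$; it suffices to show $Y$ is a subgroup of $H$ with $H/Y$ free of elements of even order. The first step is to pin down, for each $g\in H$, the set $T_g\defeq\{n\in\IZ:ng\in Y\}$. Theorem~\ref{t:1}, applied with $G=H$ and the point $0\in Y$, gives $T_g=C\cap K$ for some order-convex $C\subseteq\IZ$ and some subgroup $K\subseteq\IZ$ with $\IZ/K$ having no element of even order. Now periodicity enters: $g$ has finite order $m\ge 1$, so $T_g$ is invariant under adding or subtracting $m$ and contains $0$, hence is unbounded both above and below; since $T_g\subseteq C$ and the only order-convex subset of $\IZ$ unbounded in both directions is $\IZ$ itself, we get $C=\IZ$, and therefore $T_g=K=d_g\IZ$ for a uniquely determined integer $d_g\ge 1$, which must be odd because $\IZ/d_g\IZ$ has no element of even order.

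Two consequences then complete the argument. First, if $g\in Y$ then $1\in T_g=d_g\IZ$ forces $d_g=1$, whence $T_g=\IZ$; in particular $Y$ is symmetric and $2g\in Y$ for every $g\in Y$. Second --- and this is the heart of the matter --- given $g,h\in Y$ we have $2g,2h\in Y$, and from the identity $2(g+h)=(2g)+(2h)$ we conclude $g+h\in\frac{(2g)+(2h)}2\subseteq Y$ by midconvexity; so $Y$ is closed under addition and, being a symmetric set containing $0$, is a subgroup of $H$. Finally, for any $g\in H$ the order of the coset $g+Y$ in $H/Y$ is the least positive element of $T_g=d_g\IZ$, namely $d_g$, which is odd; hence $H/Y$ has no element of even order, as required.

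I expect the main difficulty to lie in the ``only if'' direction, concentrated in two spots: removing the order-convex factor from Theorem~\ref{t:1} (a short but slightly delicate case analysis on order-convex subsets of $\IZ$, where periodicity does the work), and recognizing that the identity $2(g+h)=(2g)+(2h)$ lets midconvexity upgrade closure under doubling to closure under addition. An argument bypassing Theorem~\ref{t:1} would instead have to grapple directly with how midconvex sets behave at elements of even order in $H$ --- precisely where midpoint-closure on its own fails to deliver $-g\in Y$ --- and that appears to be the genuinely subtle point underlying this circle of results.
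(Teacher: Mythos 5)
Your argument is correct. Note, however, that the paper itself offers no proof of Theorem~\ref{t:2}: it is quoted as a result established in the companion preprint \cite{BBKR} (as is Theorem~\ref{t:1}, on which you rely), so there is no in-paper argument to compare against. Taking Theorem~\ref{t:1} as given, your derivation is sound and complete: the ``if'' direction is the expected order-$1$-or-$2$ coset argument and indeed needs no periodicity; in the ``only if'' direction, the observation that $T_g\supseteq m\IZ$ forces the order-convex factor $C$ to be all of $\IZ$, so that $T_g$ collapses to a subgroup $d_g\IZ$ with $d_g$ odd, is exactly where periodicity must enter, and the upgrade from closure under doubling to closure under addition via $2(g+h)=(2g)+(2h)$ and midconvexity is the right trick (the same identity, run in reverse, drives the ``if'' part of the paper's proof of Theorem~\ref{t:main}). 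One could alternatively prove the ``only if'' direction without Theorem~\ref{t:1} by an elementary induction showing $ng\in Y$ for all $n\ge 2$ whenever $g\in Y$ (the paper's leftover Lemma~\ref{l:3} material runs essentially this induction for complements), but since the paper states Theorem~\ref{t:1} as available, your shorter route through it is perfectly legitimate.
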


Combining Theorem~\ref{t:main} with Theorem~\ref{t:2}, we obtain the following characterization of semiaffine sets in periodic groups.

\begin{corollary}\label{c:main2} A subset $X$ of a periodic group $G$ is semiaffine if and only if one of the following conditions holds:
\begin{enumerate}
\item $X=(H+a)\cup (H+b)$ for some subgroup $H$ of $G$ and some elements $a,b\in X$;
\item $X=(H\setminus P)+g$ for some $g\in G$ and some subgroups $P\subseteq H$ of $G$ such that the quotient group $H/P$ contains no elements of even order.
\end{enumerate}
\end{corollary}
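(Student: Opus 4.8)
The plan is to feed the conclusion of Theorem~\ref{t:main} through Theorem~\ref{t:2}, the only bridge needed being the elementary observation that every subgroup of a periodic group is periodic (an element of a subgroup $H\subseteq G$ is an element of $G$, hence has finite order).

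For the ``if'' direction I would argue as follows. If $X$ satisfies condition (1) of the corollary, it literally satisfies condition (1) of Theorem~\ref{t:main}, hence is semiaffine. If $X$ satisfies condition (2), say $X=(H\setminus P)+g$ with $P\subseteq H$ subgroups of $G$ and $H/P$ free of elements of even order, then I would check that $P$ is a midconvex subset of the periodic group $H$ by verifying the criterion of Theorem~\ref{t:2}: for each $x\in P$ the set $P-x$ equals the subgroup $P$, and $H/(P-x)=H/P$ has no elements of even order. Then $X=(H\setminus P)+g$ has the form (2) of Theorem~\ref{t:main}, so $X$ is semiaffine.

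For the ``only if'' direction I would start from a semiaffine set $X$ in the periodic group $G$ and apply Theorem~\ref{t:main}. If case (1) of that theorem holds, $X$ already has the form (1) of the corollary. If case (2) holds, write $X=(H\setminus C)+g$ with $H\subseteq G$ a subgroup, $g\in G$, and $C$ a midconvex set in $H$; since $H$ is periodic, Theorem~\ref{t:2} is available for $C$. If $C=\emptyset$, then $X=H+g=(H+g)\cup(H+g)$ falls under case (1) of the corollary. If $C\ne\emptyset$, I would fix $x_0\in C$ and use Theorem~\ref{t:2} to see that $P\defeq C-x_0$ is a subgroup of $H$ with $H/P$ free of elements of even order, so that $C=P+x_0$ is a coset of $P$ in $H$. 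Then, using $x_0\in H$ and $P\subseteq H$, the identity
$$H\setminus C=H\setminus(P+x_0)=(H\setminus P)+x_0$$
gives $X=(H\setminus C)+g=(H\setminus P)+(x_0+g)$, which is exactly the form (2) of the corollary with $g'\defeq x_0+g$.

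This is essentially bookkeeping once the two cited theorems are in hand. The only points I would be careful about are: noting that a subgroup of a periodic group is periodic, so that Theorem~\ref{t:2} applies to $C\subseteq H$; isolating the degenerate case $C=\emptyset$, which produces form (1) rather than form (2); and the shift identity $H\setminus(P+x_0)=(H\setminus P)+x_0$ that converts the complement of a coset into the complement of a subgroup. I do not expect a genuine obstacle here.
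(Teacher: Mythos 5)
Your proposal is correct and follows exactly the route the paper intends: the paper gives no explicit proof of this corollary, stating only that it follows by combining Theorem~\ref{t:main} with Theorem~\ref{t:2}, and your argument is precisely that combination, with the right details filled in (subgroups of periodic groups are periodic, the degenerate case $C=\emptyset$ falls under form (1) since a subgroup $P$ is never empty, and the coset-shift identity $H\setminus(P+x_0)=(H\setminus P)+x_0$). No gaps.
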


The following characterization of midconvex sets in subgroups of the group $\IQ$ of rational numbers was proved in \cite{BBKR}.

\begin{theorem}\label{t:3} Let $H$ be a subgroup of $\IQ$. A nonempty set $ X\subseteq H$ is midconvex in $H$ if and only if $X=C\cap(P+x)$ for some order-convex set $C\subseteq\IQ$, some $x\in X$ and some subgroup $P$ of $H$ such that the quotient group $H/P$ contains no elements of even order. 
\end{theorem}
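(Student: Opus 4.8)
The ``if'' direction is immediate. Assuming $X=C\cap(P+x)$, I would take $a,b\in X$ and $z\in H$ with $2z=a+b$: being the mean of $a,b\in\IQ$, $z$ lies weakly between $a$ and $b$, so $z\in C$ by order-convexity; and $2(z-x)=(a-x)+(b-x)\in P$, so $z-x+P$ has order at most $2$ in $H/P$ and hence is trivial since $H/P$ has no element of even order, giving $z\in P+x$. Thus $z\in X$ and $X$ is midconvex.

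For the ``only if'' direction, since midconvexity is translation-invariant, I would fix $x\in X$, replace $X$ by $X-x$ to arrange $0\in X$, and aim to write $X=C\cap P$ with $C\subseteq\IQ$ order-convex and $P\subseteq H$ a subgroup for which $H/P$ has no element of even order; translating back by $x$ then yields the stated form with $C+x$ order-convex and the same $P$. The only plausible choices are $C\defeq\{z\in\IQ: a\le z\le b\text{ for some }a,b\in X\}$ (the order-convex hull of $X$ in $\IQ$) and $P\defeq\langle X\rangle$ (the subgroup generated by $X$). Since $X\subseteq C\cap\langle X\rangle$ is clear, everything reduces to two assertions: \textbf{(A)} $X$ is order-convex inside $\langle X\rangle$, i.e.\ $z\in\langle X\rangle$ and $a\le z\le b$ with $a,b\in X$ imply $z\in X$; and \textbf{(B)} $2h\in\langle X\rangle$ implies $h\in\langle X\rangle$ for $h\in H$, which is exactly the condition that $H/\langle X\rangle$ has no element of even order.

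The key move for both is to collapse everything into a single cyclic subgroup and invoke Theorem~\ref{t:1}. For \textbf{(A)}, writing $z=\sum_{i=1}^k n_i x_i$ with $x_i\in X$, I would let $g>0$ generate the (necessarily cyclic) subgroup $\langle a,b,x_1,\dots,x_k\rangle\subseteq\langle X\rangle\subseteq H$. By Theorem~\ref{t:1} applied at base point $0$, $X\cap\IZ g=(I_g\cap d_g\IZ)\,g$ for some order-convex $I_g\subseteq\IZ$ with $0\in I_g$ and some $d_g\in\{0,1,3,5,\dots\}$ — the oddness of $d_g$ being precisely the ``no even order'' clause of Theorem~\ref{t:1} in the cyclic case. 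If $X\cap\IZ g=\{0\}$ then $a=b=z=0\in X$; otherwise $d_g$ is odd and positive. Writing $a=\alpha g,\ b=\beta g,\ x_i=\nu_i g,\ z=\zeta g$ with integer coordinates, the inclusions $a,b,x_i\in X\cap\IZ g$ force $\alpha,\beta,\nu_i\in d_g\IZ$ and $\alpha,\beta\in I_g$; hence $\zeta=\sum n_i\nu_i\in d_g\IZ$, and $\alpha\le\zeta\le\beta$ with $I_g$ an interval gives $\zeta\in I_g$, so $z=\zeta g\in X\cap\IZ g\subseteq X$. For \textbf{(B)}, writing $2h=\sum_{i=1}^k n_i x_i$ (we may assume some $x_i\ne0$, else $h=0$), I would let $g>0$ generate $\langle h,x_1,\dots,x_k\rangle\subseteq H$; again $X\cap\IZ g=(I_g\cap d_g\IZ)g$ with $d_g$ odd and positive, and the interval $I_g$ containing $0$ and a nonzero multiple of $d_g$ also contains $\pm d_g$, so $\pm d_g g\in X$ and in particular $d_g g\in\langle X\rangle$. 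With $h=\eta g$ and $x_i=\nu_i g$, the equation $2\eta=\sum n_i\nu_i$ and $d_g\mid\nu_i$ give $d_g\mid 2\eta$, whence $d_g\mid\eta$ (as $d_g$ is odd), so $h=\eta g=\tfrac{\eta}{d_g}(d_g g)\in\langle X\rangle$.

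Conceptually, then, the proof just ``reduces the two required properties to the one-dimensional statement Theorem~\ref{t:1}'' via the single device of passing to the cyclic subgroup generated by the finitely many rationals at hand (a greatest common divisor computation), after which the oddness of each $d_g$ does everything. I would expect the only real work to be bookkeeping rather than ideas: verifying that midconvexity transfers under the translation making $0\in X$; treating the degenerate slices $X\cap\IZ g=\{0\}$; pinning down the ``$d_g g\in X$'' normalization with the correct sign; and checking that the translated-back pair $(C+x,\,P+x)$ is literally in the form the theorem requires. If there is a subtle point, it is appreciating that one must include $h$ (respectively $a$ and $b$) among the generators defining $g$, so that the element one is trying to locate actually lies in the cyclic slice where Theorem~\ref{t:1} is available.
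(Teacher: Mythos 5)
The paper does not actually prove Theorem~\ref{t:3}: it is quoted from the companion preprint \cite{BBKR} on midconvex sets, so there is no in-paper argument to compare yours against. Judged on its own, your derivation is correct and is the natural one: it deduces Theorem~\ref{t:3} from Theorem~\ref{t:1} (also imported from \cite{BBKR}), using that every finitely generated subgroup of $\IQ$ is cyclic to localize each verification to a single cyclic slice $\IZ g$ where Theorem~\ref{t:1} pins down $X\cap\IZ g$ as $(I_g\cap d_g\IZ)g$ with $d_g$ zero or odd. Your choices $P=\langle X\rangle$ and $C$ the order-convex hull of $X$ do work, and the two reductions (A) and (B) are carried out correctly; in particular you rightly include the target element ($z$, respectively $h$) among the generators of the cyclic subgroup, which is the one place a careless version of this argument would break. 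Two small points deserve tightening in a written-up version: in (B) the interval $I_g$ contains $d_g$ \emph{or} $-d_g$ depending on the sign of the nonzero element you found, not necessarily both, though either suffices to get $d_gg\in\langle X\rangle$; and in (A) the case where $\langle a,b,x_1,\dots,x_k\rangle$ is trivial should be folded explicitly into your degenerate case $X\cap\IZ g=\{0\}$. Neither is a gap. Of course, the proof is only as unconditional as Theorem~\ref{t:1}, which you (like the paper) take as a black box; a fully self-contained proof would have to establish the cyclic-slice structure theorem as well.
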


Combining Theorem~\ref{t:main} with Theorem~\ref{t:3}, we obtain the following characterization of semiaffine sets in subgroups of the group $\IQ$.

\begin{corollary}\label{c:main3} A subset $X$ of a subgroup $G$ of the group $\IQ$ is  semiaffine if and only if one of the following conditions holds:
\begin{enumerate}
\item $X=(H+a)\cup (H+b)$ for some subgroup $H$ of $G$ and some elements $a,b\in X$;
\item $X=(H\setminus (P\cap C))+g$ for some $g\in G$, some order-convex set $C$ in $\IQ$ and some subgroups $P\subseteq H$ of $G$ such that the quotient group $H/P$ contains no elements of even order.
\end{enumerate}
\end{corollary}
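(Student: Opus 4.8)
The plan is to obtain Corollary~\ref{c:main3} by substituting the description of midconvex subsets of subgroups of $\IQ$ from Theorem~\ref{t:3} into the dichotomy of Theorem~\ref{t:main}. Condition~(1) of the corollary is literally condition~(1) of Theorem~\ref{t:main}, so the whole task is to see that, for a subgroup $G\subseteq\IQ$, condition~(2) of Theorem~\ref{t:main} is equivalent to condition~(2) of Corollary~\ref{c:main3}.

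For the ``only if'' direction I would start from a semiaffine set $X\subseteq G$ and apply Theorem~\ref{t:main}. If $X$ satisfies condition~(1), we are done. Otherwise $X=(H\setminus C)+g$ for some subgroup $H\subseteq G$, some $g\in G$ and some midconvex set $C$ in $H$. If $C=\emptyset$, then $X=H+g$ also satisfies condition~(1) (with $a=b=g$). If $C\ne\emptyset$, then $H$ is a subgroup of $\IQ$, so Theorem~\ref{t:3} provides an order-convex set $C_0\subseteq\IQ$, an element $x\in C$, and a subgroup $P\subseteq H$ with $H/P$ having no elements of even order, such that $C=C_0\cap(P+x)$. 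Using $x\in C\subseteq H$ (so that $H-x=H$) I would translate by $-x$ to obtain $H\setminus C=\big(H\setminus(P\cap(C_0-x))\big)+x$ and hence $X=\big(H\setminus(P\cap(C_0-x))\big)+(x+g)$; since $C_0-x$ is again order-convex in $\IQ$, $x+g\in G$, and $P\subseteq H\subseteq G$, this is exactly the form required in condition~(2) of the corollary.

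For the ``if'' direction, a set of the form in condition~(1) is semiaffine by Theorem~\ref{t:main}, so suppose $X=(H\setminus(P\cap C))+g$ with $g\in G$, $C$ order-convex in $\IQ$, and $P\subseteq H$ subgroups of $G$ with $H/P$ free of even-order elements. By Theorem~\ref{t:main} it is enough to check that $P\cap C$ is a midconvex subset of $H$ (note $P\cap C\subseteq P\subseteq H$). I would do this by writing $P\cap C=P\cap(C\cap H)$ and observing that an intersection of midconvex subsets of $H$ is again midconvex. That $C\cap H$ is midconvex in $H$ follows from order-convexity of $C$: for $x,y\in C\cap H$ and $z\in H$ with $2z=x+y$, the element $z$ equals the midpoint $\frac{x+y}2\in\IQ$, which lies between $x$ and $y$, hence belongs to $C$. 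That $P$ is midconvex in $H$ uses the hypothesis on $H/P$: if $x,y\in P$ and $2z=x+y\in P$ for some $z\in H$, then $z+P$ has order $1$ or $2$ in $H/P$, so order $1$, i.e.\ $z\in P$.

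I expect the only subtle point to be the translation bookkeeping in the ``only if'' part: converting the coset description $C=C_0\cap(P+x)$ delivered by Theorem~\ref{t:3} into the subgroup description $P\cap(C_0-x)$ demanded by the corollary, while simultaneously absorbing $x$ into the ambient shift and checking that order-convexity and the memberships $P\subseteq H\subseteq G$ and $x+g\in G$ all survive. Everything else is a direct invocation of the cited theorems together with the elementary closure of midconvexity under intersection.
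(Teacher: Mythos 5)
Your proposal is correct and follows exactly the route the paper intends: the paper offers no written proof of Corollary~\ref{c:main3} beyond the remark that it is obtained by ``combining Theorem~\ref{t:main} with Theorem~\ref{t:3}'', and your argument carries out precisely that combination, with the translation bookkeeping ($C=C_0\cap(P+x)$ versus $P\cap(C_0-x)$, absorbing $x$ into the shift $g$) and the converse verification that $P\cap C$ is midconvex both handled correctly.
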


\section{Some Lemmas}

In this section we prove some lemmas that will be used in the proof of Theorem~\ref{t:main} presented in the next section. By $\IZ$ be denote the additive group of integer numbers and by $\IN$ the set of positive integer numbers.

\begin{lemma}\label{l:1} Let $X$ be a semiaffine subset of a group $G$ and $a\in X-X$ be such that $2a\notin X-X$. Then $X=(H+x)\cup(H+x+a)$ for some subgroup $H$ of $G$ and some $x\in X$.
\end{lemma}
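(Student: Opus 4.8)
The plan is to first extract from the semiaffineness of $X$ together with the hypothesis $2a\notin X-X$ a clean combinatorial dichotomy among the elements of $X$, and then to show that one half of the resulting partition is an affine set, so that the Proposition applies.

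First I would fix a representation $a=p-q$ with $p,q\in X$. Applying semiaffineness to the triple $(x,p,q)$ for an arbitrary $x\in X$ shows that $x+a\in X$ or $x-a\in X$, and these alternatives are mutually exclusive: if both $x+a$ and $x-a$ lay in $X$, then $2a=(x+a)-(x-a)\in X-X$, contradicting the hypothesis. Hence the sets $X_+\defeq\{x\in X:x+a\in X\}$ and $X_-\defeq\{x\in X:x-a\in X\}$ partition $X$. A short check — adding $a$ to an element of $X_+$ and then subtracting it returns an element of $X$ — gives $X_-=X_++a$, so $X=X_+\cup(X_++a)$; and $X_+$ is nonempty because $q+a=p\in X$, i.e.\ $q\in X_+$.

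The heart of the argument is to prove that $X_+$ is affine, that is $u+v-w\in X_+$ for all $u,v,w\in X_+$. Writing $s\defeq u+v-w$ and $d\defeq u-v+w$, I would argue in two steps, each time feeding semiaffineness a triple obtained from $(u,v,w)$ by adding $a$ to some entries — which is legal since $u,v,w$ and, by definition of $X_+$, also $u+a,v+a,w+a$ lie in $X$. \emph{Step one: $s\in X$.} If not, the triple $(u,v,w)$ forces $d\in X$, while the triple $(u+a,v,w+a)$, whose two semiaffine candidates are exactly $s$ and $d+2a$, forces $d+2a\in X$; then $2a=(d+2a)-d\in X-X$, a contradiction. \emph{Step two: $s+a\in X$} (which, together with $s\in X$, is precisely $s\in X_+$). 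If $s+a\notin X$, then the triple $(u+a,v,w)$ forces $d+a\in X$ and the triple $(u,v+a,w)$ forces $d-a\in X$, whence $2a=(d+a)-(d-a)\in X-X$, again a contradiction. So $X_+$ is affine.

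Finally, picking any $x\in X_+$, the Proposition gives that $H\defeq X_+-x$ is a subgroup of $G$ with $X_+=H+x$; then $X_-=X_++a=H+x+a$, so $X=(H+x)\cup(H+x+a)$, as required. I expect the only real obstacle to be Step two above — more precisely, guessing that one should perturb the entries of $(u,v,w)$ by $\pm a$ and then read off which of the two semiaffine alternatives must survive. Once the right triples are chosen, every step is a one-line verification, and the role of the hypothesis $2a\notin X-X$ is exactly to kill the "wrong" alternative at each stage.
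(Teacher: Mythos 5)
Your proof is correct and follows essentially the same route as the paper: your $X_+$ is the paper's $X\cap(X-a)$, your two-step verification that $X_+$ is affine (perturbing the triple by $+a$ in chosen slots so that the surviving alternative yields $2a\in X-X$) is, triple for triple, the paper's Claim~\ref{cl:4}, and the final partition of $X$ into $X_+$ and $X_++a$ is its concluding claim. The only difference is that you omit the paper's auxiliary claim describing $X\cap(C_a+x)$, which the paper's own argument never actually uses, so nothing is lost.
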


\begin{proof}  Let $C_a$ be the cyclic subgroup generated by the element $a$ in the group $G$. Let $D=\{n\in\IN:na\in X-X\}$. If $D=\{1\}$, then let $H_a$ be the trivial subgroup of $C_a$. If $D\ne\{1\}$, then let $n=\min(D\setminus\{1\})$. If follows from $a\in X-X$ and $2a\notin X-X$ that $n\ge 3$. Let $H_a$ be the cyclic subgroup generated by the element $g=(n+1)a$ in $C_a$.

\begin{claim} For every $x\in X\cap (X-a)$ we have $X\cap (C_a+x)=(H_a+x)\cup(H_a+x+a)$.
\end{claim}

\begin{proof} First we consider the case when $D=\{1\}$. In this case $H_a=\{0\}$ and $(H_a+x)\cup(H_a+x+a)=\{x,x+a\}$. Assuming that $X\cap (C_a+x)\ne\{x,x+a\}$, we can find an integer  number $k\notin\{0,1\}$ such that $x+ka\in X$. It follows from $\{x,x+a\}\subseteq X$ and  $2a\notin X-X$ that $k\notin\{2,3,-1,-2\}$. If $k>0$, then $ka=(ka+x)-x\in X-X$ and hence $k\in D$, which contradicts $D=\{1\}$. If $k<0$, then $-ka=x-(x+ka)\in X-X$ and hence $-k\in D$, which contradicts $D=\{1\}$. In both cases we obtain a contradiction showing that $X\cap (C_a+x)=(H_a+x)\cup(H_a+x+a)$.

Next, assume that $D\ne\{1\}$. In this case the number $n=\min(D\setminus\{1\})$ is well-defined and we can consider the element $g\defeq (n+1)a$. 
 By induction we shall prove that for every integer number $i\ge 0$, the set $X_i=\{jg+x,jg+x+a:j\in\IZ,\;|j|\le i\}$ is a subset of $X$. The set $X_0=\{x,x+a\}$ is the subset of $X$ by the choice of $x\in X\cap(X-a)$. Assume that for some $i\in\IN$ we know that $X_{i-1}\subseteq X$. Then $\{(i-1)g+x,(i-1)g+x+a\}\subseteq X$. Since $na\in X-X$ and $X$ is semiaffine, either $(i-1)g+x+a+na\in X$ or $(i-1)g+x+a-na\in X$. In the second case, we have $(n-1)a=((i-1)g+x)-((i-1)g+x+a-na)\in X-X$, which contradicts the choice of $n=\min (D\setminus \{1\})$. This contradiction shows that $(i-1)g+x+a-na\notin X$ and hence $ig+x=(i-1)g+x+a+na\in X$. Since $a\in X-X$ and $X$ is semiaffine, $ig+x+a\in X$ or $ig+x-a\in X$. Assuming that $ig+x-a\in X$, we obtain that $(n-1)a=g-2a=ig+x-a-((i-1)g+x+a)\in X-X$, which contradicts the choice of $n=\min(D\setminus\{1\})$. This contradiction shows that $ig+x-a\notin X$ and hence $ig+x+a\in X$. Therefore, $\{ig+x,ig+x+a\}\subseteq X$. By analogy we can prove that $\{-ig+x,-ig+x+a\}\subseteq X$. This completes the inductive step.

After completing the inductive construction, we obtain that $$(H_a+x)\cup(H_a+x+a)=\{ig+x,ig+x+a:i\in \IZ\}=\bigcup_{i\in\IN}X_i\subseteq X\cap (C_a+a).$$ Assuming that  $X\cap (C_a+x)\ne (H_a+x)\cup(H_a+x+a)$, we can find an element $$c\in X\cap (C_a+x)\setminus\big((H_a+x)\cup(H_a+x+a)\big).$$ Write $c$ as $c=ma+x$ for some $m\in\IZ$. Then there exists a unique number $i\in\IZ$ such that $i(n+1)+1<m<(i+1)(n+1)$. Then $(m-(i(n+1)+1))a=c-(ig+x+a)\in X-X$ and $1<m-(i(n+1)+1)<n$, which contradicts the choice of $n=\min (D\setminus\{1\})$. This contradiction shows that $X\cap (C_a+x)=(H_a+x)\cup(H_a+x+a)$.
\end{proof}

\begin{claim}\label{cl:4} For every $x,y,z\in X\cap(X-a)$ we have $x+y-z\in X\cap(X-a)$.
\end{claim}

\begin{proof} It follows from $y,z\in X\cap (X-a)$ that $\{y,z,y+a,z+a\}\subseteq X$ and hence $$\{y-z,y+a-z,y-z-a\}\subseteq X-X.$$ We claim that $x+y-z\in X$. Assuming that $x+y-z\notin X$ and using the semiaffinity of $X$, we conclude that  $x-y+z\in X$. Since $x+a\in X$ and $(x+a)+(y-z-a)=x+y-z\notin X$, the semiaffinity of $X$ ensures that $(x+a)-(y-z-a)\in X$. Then $2a=((x+a)-(y-z-a))-(x-y+z)\in X-X$, which contradicts our assumption. This contradiction shows that $x+y-z\in X$. 

If $x+y-z\notin X-a$, then $(x+a)+y-z\notin X$ and hence $(x+a)-(y-z)\in X$ by the semiaffinity of $X$. Since $x+y+a-z\notin X$ and $y+a-z\in X-X$, the semiaffinity of $X$ ensures that $x-(y+a-z)\in X$. Then $2a=((x+a)-(y-z))-(x-(y+a-z))\in X-X$, which contradicts our assumption. This contradiction shows that $x+y-z\in X\cap (X-a)$.
\end{proof}

Since $a\in X-X$, there exists an element $x\in X\cap (X-a)$. Consider the set $H=(X\cap (X-a))-x$ and observe that for every $h,h'\in H$, the elements $y=x+h$ and $z=x+h'$ belong to $X\cap (X-a)$. By Claim~\ref{cl:4}, $x+h-h'=x+(y-x)-(z-x)=x+y-z=X\cap(X-a)$  and hence $h-h'\in H$, which means that $H$ is a subgroup of the group $G$.

\begin{claim} $X=(H+x)\cup (H+x+a)$.
\end{claim}

\begin{proof} Observe that $H+x=X\cap (X-a)\subseteq X$ and $H+x+a=(X\cap (X-a))+a\subseteq X$. On the other hand, for every $y\in X$, the semiaffinity of $X$ ensures that $y+a\in X$ or $y-a\in X$. In the first case, $y\in X\cap (X-a)=H+x$. In the second case, $y-a\in X\cap (X-a)=H+x$ and $y\in H+x+a$.
\end{proof} 
\end{proof}

\begin{lemma}\label{l:2} Let $X$ be a nonempty semiaffine set in a group $G$. The set $X-X$ is a subgroup of $G$ if and only if $\forall a\in X-X\; (2a\in X-X)$.
\end{lemma}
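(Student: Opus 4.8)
The forward implication is immediate: a subgroup is closed under addition, so $2a=a+a\in X-X$ for every $a\in X-X$; all the content is in the converse. So assume $2a\in X-X$ for every $a\in X-X$. Since $X-X$ is symmetric and contains $0$ (as $X\ne\emptyset$), it suffices to prove it is closed under addition. Fix $a,b\in X-X$ and suppose, for contradiction, that $a+b\notin X-X$. The plan is to analyse the two ``witness sets''
$$A\defeq\{q\in X:q+a\in X\},\qquad B\defeq\{q\in X:q-b\in X\},$$
which are nonempty (any representations $a=p-q_0$, $b=r-s$ with $p,q_0,r,s\in X$ give $q_0\in A$ and $r\in B$). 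Two trivial reformulations of the contradiction hypothesis will be used constantly: (i) if some $q$ lay in $A\cap B$ then $a+b=(q+a)-(q-b)\in X-X$, so $A\cap B=\emptyset$; and (ii) $a+b\in X-X$ iff $q+a+b\in X$ for some $q\in X$, so in fact $q+a+b\notin X$ for every $q\in X$.

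The first, and main, step is to deduce from the semiaffinity of $X$ alone the three translation inclusions $A+a\subseteq B$, $B-a\subseteq A$, and $A+b\subseteq B$. Each would be obtained by applying semiaffinity to a suitably chosen triple inside $X$ — for the three relations I would use $\{q+a,\,q',\,q'-b\}$, $\{q,\,q'',\,q''+a\}$, $\{q,\,q',\,q'-b\}$ respectively, with $q$ in the left-hand set and $q'\in B$, $q''\in A$ arbitrary. In each case one of the two alternatives offered by semiaffinity is impossible, as it would either place a point in $A\cap B$ or exhibit some $q+a+b\in X$; the surviving alternative is exactly the claimed membership. From the first two inclusions $B=A+a$, and then $A+b\subseteq B=A+a$ gives the crucial relation $A+(b-a)\subseteq A$.

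The second step invokes the hypothesis. Since $2a\in X-X$, choose $w\in X$ with $w+2a\in X$. If $w\in A$, then $w+a\in A$ (because $(w+a)+a=w+2a\in X$), whence $w+a\in B$ by $A+a\subseteq B$, contradicting $A\cap B=\emptyset$; so $w+a\notin X$. Applying semiaffinity to $\{w,\,q''+a,\,q''\}$ with $q''\in A$, and then to $\{w+2a,\,w-a,\,w\}$ — each time discarding the impossible alternative ``$w+a\in X$'' — gives $w-a\in X$ and $w+3a\in X$, so $w-a\in A$ and $w+2a\in A$. Finally, $A+(b-a)\subseteq A$ applied to $w+2a$ gives $w+a+b\in A\subseteq X$, contradicting reformulation (ii). Hence $a+b\in X-X$, completing the proof.

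The real difficulty is the first step: extracting the clean translation inclusions from the comparatively weak semiaffine property. The point is to arrange matters — via the sets $A$, $B$ and the reformulations (i) and (ii) — so that for each chosen auxiliary triple the ``wrong'' branch of the semiaffine dichotomy is automatically forbidden. After that, the hypothesis $2(X-X)\subseteq X-X$ enters at exactly one place, supplying an element $w$ with $w+2a\in X$ but $w+a\notin X$, and this single gap propagates into the contradiction.
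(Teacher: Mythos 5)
Your proof is correct; every application of semiaffinity I checked uses a legitimate triple from $X$, the forbidden branch is genuinely excluded each time by one of your reformulations (i), (ii), and the final contradiction $w+a+b\in X$ against (ii) is sound. (The membership $w-a\in A$ you derive along the way is never actually used, but that is harmless.) Your route is organized differently from the paper's. The paper proves closure under subtraction ($a-b\in X-X$) by a two-case split: either some $x\in X$ has both $x+b$ and $x-b$ in $X$, in which case the conclusion is immediate from one application of semiaffinity, or no such $x$ exists, in which case a chain of five or six forced memberships starting from a pair $x,x+b\in X$ terminates in an element with $\{(x-a-b)-b,(x-a-b)+b\}\subseteq X$, contradicting the case hypothesis; the doubling assumption enters once, through $2b\in X-X$. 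You instead prove closure under addition, package the membership-chasing into the witness sets $A$ and $B$ and the structural relations $B=A+a$ and $A+(b-a)\subseteq A$, and then feed a single witness $w$ of $2a\in X-X$ through those relations. The underlying engine is identical (the semiaffine dichotomy with one branch killed by the contradiction hypothesis, plus exactly one use of the doubling condition), but your decomposition is more systematic and makes it transparent where each hypothesis is consumed, at the cost of a slightly longer setup; the paper's version is shorter but its chain of ad hoc exclusions is harder to motivate.
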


\begin{proof}  The ``only if'' part is trivial. To prove the ``if'' part, assume that $2a\in X-X$ for all $a\in X-X$. To prove that $X-X$ is a subgroup of $G$, it suffices to check that $a-b\in X-X$ for any $a,b\in X-X$. So, fix any $a,b\in X-X$. Two cases are possible.
\smallskip

1. There exists  $x\in X$ such that $\{x-b,x+b\}\subseteq X$. Since $X$ is semiaffine, $x-a\in X$ or $x+a\in X$. If $x-a\in X$, then $a-b=(x-b)-(x-a)\in X-X$. If $x+a\in X$, then $a-b=(x+a)-(x+b)\in X-X$.
\smallskip

2. For all $x\in X$, $\{x-b,x+b\}\not\subseteq X$. To derive a contradiction, assume that $a-b\notin X-X$. Since $X-X=-(X-X)$, $a-b\notin X-X$ implies $b-a\notin X-X$.  Since $b\in X-X$, there exists $x\in X$ such that $x+b\in X$. Our assumption ensures that 
\begin{multline*}
\{x-b,x-b+a,x-a+b,x+a,x+2b-a\}=\\\{x-b,x-(b-a),x+(b-a),(x+b)-(b-a),(x+b)+(b-a)\}\cap X=\emptyset
\end{multline*}
 Since $x+a\notin X$ and $X$ is semiaffine, $x-a\in X$. Since $(x-a)+b\notin X$ and $X$ is semiaffine, $x-a-b\in X$. Since $2b\in X-X$ and $(x-a)+2b\notin X$, the semiaffinity of $X$ ensures that $x-a-2b\in X$ and then $\{(x-a-b)-b,(x-a-b)+b\}\subseteq X$, which contradicts our assumption. This contradiction shows that $a-b\in X-X$.
\end{proof}

\section{Proof of Theorem~\ref{t:main}}\label{s:main}

The ``if'' part of Theorem~\ref{t:main} is proved in the following two lemmas.

\begin{lemma} A subset $X$ of a group $G$ is semiaffine if $X=(H+a)\cup (H+b)$ for some subgroup $H$ of $G$ and some elements $a,b\in X$.
\end{lemma}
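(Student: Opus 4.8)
The plan is to reduce the verification of semiaffinity to a short finite case check performed in the quotient group $G/H$. Let $\pi\colon G\to G/H$ be the quotient homomorphism and put $\alpha\defeq\pi(a)$ and $\beta\defeq\pi(b)$. Since $\pi^{-1}(\pi(c))=H+c$ for every $c\in G$, the hypothesis $X=(H+a)\cup(H+b)$ rewrites as $X=\pi^{-1}(\{\alpha,\beta\})$; in particular, for $w\in G$ we have $w\in X$ if and only if $\pi(w)\in\{\alpha,\beta\}$. (This also makes it clear that the clause ``$a,b\in X$'' in the statement is automatic, since $0\in H$.)

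Next I would isolate the general principle that the $\pi$-preimage of a semiaffine set is again semiaffine. Indeed, if $S\subseteq G/H$ is semiaffine and $x,y,z\in\pi^{-1}(S)$, then $\pi(x),\pi(y),\pi(z)\in S$, so by semiaffinity of $S$ one of the elements $\pi(x)+(\pi(y)-\pi(z))$, $\pi(x)-(\pi(y)-\pi(z))$ lies in $S$. As these elements equal $\pi(x+(y-z))$ and $\pi(x-(y-z))$ respectively, one of $x+(y-z)$, $x-(y-z)$ lies in $\pi^{-1}(S)$, witnessing semiaffinity of $\pi^{-1}(S)$. Applying this with $S=\{\alpha,\beta\}$, it remains only to show that every subset of an Abelian group having at most two elements is semiaffine.

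Finally I would run that last check. Let $x,y,z\in\{\alpha,\beta\}$. If $x=z$, then $x+(y-z)=y\in\{\alpha,\beta\}$; if $y=z$, then $x+(y-z)=x\in\{\alpha,\beta\}$; and in the only remaining configuration $x=y\ne z$ (possible because the ambient set has only two elements), we get $x-(y-z)=z\in\{\alpha,\beta\}$. In every case one of $x+(y-z)$, $x-(y-z)$ lies in $\{\alpha,\beta\}$, so $\{\alpha,\beta\}$ is semiaffine, and therefore so is $X=\pi^{-1}(\{\alpha,\beta\})$.

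Since the argument is just a three-line case distinction after the reformulation, there is essentially no obstacle; the only points to get right are the clean identity $X=\pi^{-1}(\{\alpha,\beta\})$ and the observation that, whenever $x+(y-z)$ falls outside a two-point set, this is because two of $x,y,z$ coincide in a way that forces $x-(y-z)$ to land inside it. One could equally avoid quotients by writing each of $x,y,z$ as $h+c$ with $h\in H$ and $c\in\{a,b\}$, using $x+(y-z)=(h_x+h_y-h_z)+(c_x+c_y-c_z)$ with $h_x+h_y-h_z\in H$, and running the same three-case check on the summand $c_x+c_y-c_z$.
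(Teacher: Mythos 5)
Your proof is correct, but it takes a different route from the paper. The paper verifies semiaffinity directly by an eight-case check on which of the two cosets $H+a$, $H+b$ each of $x,y,z$ lies in, computing in each case that $x+y-z$ or $x-y+z$ lands back in one of the cosets. You instead factor through the quotient: you isolate the general (and reusable) principle that the preimage of a semiaffine set under a group homomorphism is semiaffine, and then reduce the problem to checking that a set with at most two elements is semiaffine, which collapses the combinatorics to three cases ($x=z$, $y=z$, or neither, the last forcing $x=y$ in a two-point set so that $x-(y-z)=z$). Your case analysis is exhaustive and each step is sound, including the observation that $X=\pi^{-1}(\{\alpha,\beta\})$ and that the hypothesis $a,b\in X$ is automatic. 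What your approach buys is conceptual economy and a lemma of independent interest about homomorphic preimages; what the paper's approach buys is complete self-containedness, staying inside $G$ without invoking quotient groups. Your closing remark shows you also see how to translate your argument back into the paper's coset language, so the two proofs are ultimately reorganizations of the same computation.
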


\begin{proof} To show that the set $X=(H+a)\cup (H+b)$ is semiaffine, take any points $x,y,z\in X$.  Depending on the location of the points $x,y,z$ in the set $X=(H+a)\cup(H+b)$, we consider eight cases.
\smallskip

1. If $x\in H+a$, $y\in H+a$, and $z\in H+a$, then $x+y-z\in H+a\subseteq X$.

2. If $x\in H+a$, $y\in H+a$ and $z\in H+b$, then $x-y+z\in H+b\subseteq X$.

3. If $x\in H+a$, $y\in H+b$ and $z\in H+a$, then $x+y-z\in H+b\subseteq X$.

4. If $x\in H+a$, $y\in H+b$, and $z\in H+b$, then $x+y-z\in H+a\subseteq X$.

5. If $x\in H+b$, $y\in H+a$ and $z\in H+a$, then $x+y-z\in H+b\subseteq X$.

6. If $x\in H+b$, $y\in H+a$ and $z\in H+b$, then $x+y-z\in H+a\subseteq X$.

7. If $x\in H+b$, $y\in H+b$ and $z\in H+a$, then $x-y+z\in H+a\subseteq X$.

8. If $x\in H+b$, $y\in H+b$ and $z\in H+b$, then $x+y-z\in H+b\subseteq X$.
\end{proof}

\begin{lemma} A subset $X$ of a group $G$ is semiaffine if $X=(H\setminus C)+g$ for some $g\in G$, some subgroup $H$ of $G$ and some midconvex set $C$ in $H$.
\end{lemma}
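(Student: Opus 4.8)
The plan is to reduce immediately to the case $g=0$ and then exploit the defining property of midconvexity through the algebraic identity $(x+y-z)+(x-y+z)=2x$.

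\emph{Step 1: translation invariance.} First I would record that the class of semiaffine sets is closed under translations. If $X$ is semiaffine and $g\in G$, then given any three points of $X+g$, say $x+g,y+g,z+g$ with $x,y,z\in X$, the two candidate points are $(x+g)+(y+g)-(z+g)=(x+y-z)+g$ and $(x+g)-(y+g)+(z+g)=(x-y+z)+g$; since one of $x+y-z$ and $x-y+z$ lies in $X$, one of these lies in $X+g$. Hence it suffices to prove that $X\defeq H\setminus C$ is semiaffine whenever $H$ is a subgroup of $G$ and $C$ is a midconvex subset of $H$.

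\emph{Step 2: the core argument.} Fix $x,y,z\in X=H\setminus C$. Because $H$ is a subgroup and $x,y,z\in H$, both $u\defeq x+y-z$ and $v\defeq x-y+z$ belong to $H$, so it only remains to show that $u\notin C$ or $v\notin C$. Suppose not: $u\in C$ and $v\in C$. Midconvexity of $C$ then gives $\frac{u+v}2=\{w\in H:2w=u+v\}\subseteq C$. But $u+v=2x$ and $x\in H$, so $x\in\frac{u+v}2\subseteq C$, contradicting $x\in H\setminus C$. Thus $u\notin C$ or $v\notin C$, i.e., $x+y-z\in X$ or $x-y+z\in X$, which is exactly semiaffinity.

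I do not anticipate a genuine obstacle: the argument is essentially the one-line observation that $(x+y-z)+(x-y+z)=2x$ forces $x$ into $C$ as soon as both "affine images" of the triple land in the midconvex set $C$. The only points needing a (routine) check are the translation-invariance reduction in Step 1 and the membership $u,v\in H$ in Step 2.
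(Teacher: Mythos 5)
Your proof is correct and follows essentially the same route as the paper's: translate by $-g$ to reduce to $H\setminus C$, note that $x+y-z$ and $x-y+z$ lie in $H$, and use the identity $(x+y-z)+(x-y+z)=2x$ together with midconvexity of $C$ to force $x\in C$ if both candidates fell into $C$. The only cosmetic difference is that you isolate the translation-invariance step as a separate observation, whereas the paper performs the shift inline within a proof by contradiction.
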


\begin{proof} Assuming that  $X=(H\setminus C)+g$ is not semiaffine, we can find elements $x,y,z\in X$ such that $\{x+y-z,x-y+z\}\cap X=\emptyset$ and hence $\{x+y-z-g,x-y+z-g\}\cap (X-g)=\emptyset$. Then $\{x-g,y-g,z-g\}\subseteq X-g=H\setminus C\subseteq H$ and hence  $$\{x+y-z-g,x-y+z-g\}=\{(x-g)+(y-g)-(z-g),(x-g)-(y-g)+(z-g)\}\subseteq H\setminus (X-g)=C.$$ Since $2(x-g)=(x+y-z-g)+(x-y+z-g)$, the midconvexity of $C$ in $H$ ensures that $x-g\in C$, which contradicts the choice of $x\in X=(H\setminus C)+g$.
\end{proof}

To prove the ``only if'' part of Theorem~\ref{t:main}, assume that the set $X$ is semiaffine in the group $G$. If for some $g\in X-X$ we have $2g\notin X-X$, then by Lemma~\ref{l:1}, $X=(H+a)\cup (H+a+g)$ for some subgroup $H$ of $G$ and some $a\in X$. It is clear that $b\defeq g+a\in H+g+a\subseteq X$. Therefore the case (1) of Theorem~\ref{t:main} is satisfied. 

Next, assume that $2g\in X-X$ for all $g\in X-X$.  If $X$ is empty, then $X=(H\setminus C)+0$ for the subgroup $H=G$ and the midconvex set $C=H$ in $H$. So, we assume that $X$ is not empty. In this case the set $H\defeq X-X$ is a subgroup of $G$, according to Lemma~\ref{l:2}. Pick any point $g\in X$. We claim that the set $C\defeq H\setminus(X-g)$ is midconvex in the group $H=X-X$. Indeed, take any points $a,b\in C$ and $c\in H$ with $2c=a+b$. Assuming that $c\notin C$, we conclude that $c\in H\setminus C=X-g$. Since $b-c\in H=X-X$, there exist points $y,z\in X$ such that $b-c=y-z$. Since $c+g\in X$, the semiaffinity of $X$ ensures that $c+g+y-z\in X$ or $c+g-y+z\in X$.

If $c+g+y-z\in X$, then $b+g=(c+g)+(b-c)=(c+g)+(y-z)\in X$ and hence $b\in X-g=H\setminus C$, which contradicts the choice of $b\in C$. 

If $c+g-y+z\in X$, then $a+g=2c-b+g=c+g-(b-c)=c+g-y+z\in X$ and hence $a\in X-g=H\setminus C$, which contradicts the choice of $a\in C$.

In both cases we obtain a contradiction showing that $c\in C$, which means that the set $C$ is midconvex in $H$. Since $X=(H\setminus C)+g$, the condition (2) of Theorem~\ref{t:main} is satisfied.

\end{document}